\newcommand{\sumprime}{\if@display\sideset{}{'}\sum%
            \else\sum'\fi}
\begin{document}

\numberwithin{equation}{section}

\newtheorem{theorem}{Theorem}[section]
\newtheorem{proposition}[theorem]{Proposition}
\newtheorem{conjecture}[theorem]{Conjecture}
\def\theconjecture{\unskip}
\newtheorem{corollary}[theorem]{Corollary}
\newtheorem{lemma}[theorem]{Lemma}
\newtheorem{observation}[theorem]{Observation}
\newtheorem{definition}{Definition}
\numberwithin{definition}{section} 
\newtheorem{remark}{Remark}
\def\theremark{\unskip}
\newtheorem{question}{Question}
\def\thequestion{\unskip}
\newtheorem{example}{Example}
\def\theexample{\unskip}
\newtheorem{problem}{Problem}

\def\vvv{\ensuremath{\mid\!\mid\!\mid}}
\def\intprod{\mathbin{\lr54}}
\def\reals{{\mathbb R}}
\def\integers{{\mathbb Z}}
\def\N{{\mathbb N}}
\def\complex{{\mathbb C}\/}
\def\dist{\operatorname{dist}\,}
\def\spec{\operatorname{spec}\,}
\def\interior{\operatorname{int}\,}
\def\trace{\operatorname{tr}\,}
\def\cl{\operatorname{cl}\,}
\def\essspec{\operatorname{esspec}\,}
\def\range{\operatorname{\mathcal R}\,}
\def\kernel{\operatorname{\mathcal N}\,}
\def\dom{\operatorname{Dom}\,}
\def\linearspan{\operatorname{span}\,}
\def\lip{\operatorname{Lip}\,}
\def\sgn{\operatorname{sgn}\,}
\def\Z{ {\mathbb Z} }
\def\e{\varepsilon}
\def\p{\partial}
\def\rp{{ ^{-1} }}
\def\Re{\operatorname{Re\,} }
\def\Im{\operatorname{Im\,} }
\def\dbarb{\bar\partial_b}
\def\eps{\varepsilon}
\def\O{\Omega}
\def\Lip{\operatorname{Lip\,}}

\def\Hs{{\mathcal H}}
\def\E{{\mathcal E}}
\def\scriptu{{\mathcal U}}
\def\scriptr{{\mathcal R}}
\def\scripta{{\mathcal A}}
\def\scriptc{{\mathcal C}}
\def\scriptd{{\mathcal D}}
\def\scripti{{\mathcal I}}
\def\scriptk{{\mathcal K}}
\def\scripth{{\mathcal H}}
\def\scriptm{{\mathcal M}}
\def\scriptn{{\mathcal N}}
\def\scripte{{\mathcal E}}
\def\scriptt{{\mathcal T}}
\def\scriptr{{\mathcal R}}
\def\scripts{{\mathcal S}}
\def\scriptb{{\mathcal B}}
\def\scriptf{{\mathcal F}}
\def\scriptg{{\mathcal G}}
\def\scriptl{{\mathcal L}}
\def\scripto{{\mathfrak o}}
\def\scriptv{{\mathcal V}}
\def\frakg{{\mathfrak g}}
\def\frakG{{\mathfrak G}}

\def\ov{\overline}

\thanks{}

\address{}
 \email{}

\title{Supplement to $L^2$ theory of $\bar{\partial}$ on complete K\"ahler domains}
\author{Bo-Yong Chen}
\date{}
\begin{abstract}
  We introduce a trick of dealing with $L^2$ estimates of $\bar{\partial}$ with singular weights on complete K\"ahler domains.
\end{abstract}
\maketitle

\section{Introduction}

A  domain in ${\mathbb C}^n$ is said to be complete K\"ahler if it admits a complete K\"ahler metric. In \cite{Demailly82}, J.-P. Demailly obtained several rather general results which contain as special cases the following H\"ormander type $L^2$ estimate for the $\bar{\partial}-$operator and Skoda type $L^2$ division theorem on complete K\"ahler domains:

\begin{theorem}
Let $\Omega$ be a complete K\"ahler domain in ${\mathbb C}^n$, and $\varphi$ a psh function on $\Omega$ satisfying $i\partial\bar{\partial}\varphi\ge \Theta$ in the sense of distributions for some K\"ahler form $\Theta$ on $\Omega$. For any $\bar{\partial}-$closed $(0,1)$ form $v$ with $\int_\Omega |v|^2 e^{-\varphi}<\infty$ and
$
\int_\Omega |v|^2_\Theta e^{-\varphi}<\infty,
$
there exists $u\in L^2(\Omega,{\rm loc})$ such that $\bar{\partial} u=v$ and
$$
\int_\Omega |u|^2 e^{-\varphi}\le \int_\Omega |v|^2_\Theta e^{-\varphi}.
$$
\end{theorem}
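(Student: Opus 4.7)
The plan is to apply the standard H\"ormander--Kodaira--Nakano $L^2$ scheme together with two layers of approximation: one that smooths the singular psh weight $\varphi$, and one that replaces the possibly incomplete K\"ahler form $\Theta$ by a family of complete K\"ahler forms converging to it. Concretely, let $\omega$ be any fixed complete K\"ahler metric on $\Omega$ and set $\omega_\e = \Theta + \e\,\omega$ for $\e > 0$. Each $\omega_\e$ is itself a complete K\"ahler metric on $\Omega$, and $\omega_\e \searrow \Theta$ as $\e\to 0$, giving the pointwise comparison $|v|^2_{\omega_\e}\, dV_{\omega_\e} \le |v|^2_\Theta \, dV_\Theta$ for any $(0,1)$-form $v$ (using that inversion reverses the order of positive Hermitian matrices).

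When $\varphi$ is smooth, I would apply the Bochner--Kodaira--Nakano identity on $(\Omega,\omega_\e)$, after identifying the $(0,1)$-form $v$ with the $(n,1)$-form $v\wedge dz_1\wedge\cdots\wedge dz_n$. The \emph{a priori} estimate it supplies, combined with completeness of $\omega_\e$, yields by the usual Hilbert-space argument a solution $u_\e$ of $\dbar u_\e = v$ satisfying
$$
\int_\Omega |u_\e|^2 e^{-\varphi}\,dV_{\omega_\e}\ \le\ \int_\Omega \bigl\langle [i\p\dbar\varphi,\Lambda_{\omega_\e}]^{-1} v,\, v\bigr\rangle_{\omega_\e}\, e^{-\varphi}\,dV_{\omega_\e}.
$$
Using the curvature hypothesis $i\p\dbar\varphi \ge \Theta$ together with the pointwise identity for $(n,1)$-forms which turns the inverse curvature pairing into an invariant norm, the right-hand side is bounded above by $\int_\Omega |v|^2_\Theta e^{-\varphi}$, independently of $\e$.

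To remove the smoothness assumption, I would regularize $\varphi$ by convolution on an exhaustion $\Omega_\nu\Subset \Omega$, obtaining smooth psh weights $\varphi_\nu \searrow \varphi$ with $i\p\dbar\varphi_\nu \ge \Theta_\nu$ where $\Theta_\nu\to\Theta$ smoothly on compact subsets. Applying the smooth-case estimate on $\Omega_\nu$ with metric $\omega_\e$ (suitably modified to remain complete on $\Omega_\nu$) produces solutions $u_{\nu,\e}$ with a uniform bound. Monotone convergence handles the right-hand side as $\nu\to\infty$; weak $L^2_{\rm loc}$-compactness of the $u_{\nu,\e}$ and weak lower semicontinuity of the weighted norm handle the left-hand side and the equation $\dbar u = v$. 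A final diagonal passage $\e\to 0$ recovers the sharp constant.

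The main obstacle, and the point where the ``trick'' announced in the abstract presumably enters, is the coordination of these two approximations on the noncompact domain $\Omega$: convolution regularization produces $\varphi_\nu$ only on strict interior subdomains, whereas the estimate must hold globally and the weight $e^{-\varphi}$ can blow up on the polar set $\{\varphi = -\infty\}$. Ensuring that the local solutions admit a global weak limit inheriting both the $\dbar$-equation and the exact norm bound, without losing mass near $\{\varphi=-\infty\}$ or near $\p\Omega$, is the delicate step; the complete K\"ahler hypothesis enters precisely by supplying exhaustion functions with controlled gradient, which upgrade the Bochner \emph{a priori} estimate into a genuine solvability statement on all of $\Omega$.
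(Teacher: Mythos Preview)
Your outline reproduces the standard Demailly scheme, and the smooth-weight case is indeed fine: with $\varphi$ smooth on all of $\Omega$ and $\omega_\e=\Theta+\e\omega$ complete, the usual Bochner--Kodaira--Nakano argument on $(\Omega,\omega_\e)$ gives the estimate exactly as you describe. The gap is in the singular case, and it is precisely the obstacle the paper is written to address.

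You write that after regularizing $\varphi$ to $\varphi_\nu$ on $\Omega_\nu\Subset\Omega$, you will apply ``the smooth-case estimate on $\Omega_\nu$ with metric $\omega_\e$ (suitably modified to remain complete on $\Omega_\nu$)''. That parenthetical is doing all the work and cannot in general be carried out. The paper states explicitly: \emph{the basic difference between a pseudoconvex domain and a complete K\"ahler domain is that only the former can be exhausted by subdomains of same type}. The sublevel sets $\Omega_\nu$ need not be pseudoconvex and need not carry any complete K\"ahler metric, so there is no complete metric to modify $\omega_\e$ into, and the a~priori Bochner inequality on $D_{(n,1)}(\Omega_\nu)$ does not upgrade to a solvability statement. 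Your final paragraph invokes the exhaustion function with bounded gradient, but that function lives on $\Omega$, not on $\Omega_\nu$; once you are confined to $\Omega_\nu$ by the regularization, completeness of $\Omega$ no longer helps.

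The paper's trick bypasses this entirely: on each bounded smooth $\Omega_j$ one solves not $\bar\partial u=v$ but the \emph{Dirichlet problem} $\Box_{\varphi_j}w_j=\tilde v_j$ for the Laplace--Beltrami operator (Proposition~2.1), which is available on \emph{any} bounded smooth domain by elliptic theory and G{\aa}rding's inequality, with no pseudoconvexity or completeness needed. One then sets $\tilde u_j=\bar\partial^\ast_{\varphi_j}w_j$, so that $\tilde v_j=\bar\partial\tilde u_j+\bar\partial^\ast_{\varphi_j}\bar\partial w_j$, and the heart of the argument is a cut-off computation (using the bounded-gradient exhaustion on $\Omega$ and the fact that $\bar\partial v=0$) showing that the error term $\bar\partial^\ast_{\varphi_j}\bar\partial w_j\to 0$ in the sense of distributions. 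This is the ``trick'' announced in the introduction; your proposal does not contain it.
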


\begin{theorem}
Let $\Omega\subset {\mathbb C}^n$ be a complete K\"ahler domain, and $f\in {\mathcal O}(\Omega)$, $g\in {\mathcal O}(\Omega)^{\oplus m}$, $\varphi\in PSH(\Omega)$. Put $q=\min\{n,m-1\}$. Suppose there exists a number $\alpha>1$ such that
$$
\int_\Omega |f|^2 |g|^{-2(\alpha q+1)} e^{-\varphi}<\infty,
$$
then there exists $h\in {\mathcal O}(\Omega)^{\oplus m}$ satisfying $g\cdot h=f$ and
$$
\int_\Omega |h|^2 |g|^{-2\alpha q} e^{-\varphi}\le {\rm const}_\alpha \int_\Omega |f|^2 |g|^{-2(\alpha q+1)} e^{-\varphi}.
$$
\end{theorem}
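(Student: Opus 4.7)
The plan is to follow Skoda's classical reduction of the division problem $g\cdot h=f$ to an $L^2$ $\bar\partial$-estimate on $\mathbb{C}^m$-valued sections, and then invoke Theorem 1.1 with the requisite singular psh weight; the delicate point is how to handle the singular weight on a domain which may fail to be Stein. I would start from the particular (non-holomorphic) solution $h^{(0)}:=f\bar g/|g|^2$, well defined on $\{g\neq 0\}$ and satisfying $g\cdot h^{(0)}=f$. Any holomorphic solution is then of the form $h=h^{(0)}-u$ with $u$ taking values pointwise in $\ker g\subset\mathbb{C}^m$ (so that $g\cdot u\equiv 0$) and $\bar\partial u=v$, where $v:=\bar\partial h^{(0)}=f\,\bar\partial(\bar g/|g|^2)$. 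Holomorphicity of $g$ together with $g\cdot(\bar g/|g|^2)\equiv 1$ forces $g\cdot v\equiv 0$, so $v$ is itself $\ker g$-valued.

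\emph{Weight choice and Skoda pointwise inequality.} Take the psh weight $\psi:=\varphi+\alpha q\log|g|^2$, so that $e^{-\psi}=e^{-\varphi}|g|^{-2\alpha q}$ matches the target weight on $h$. With an auxiliary K\"ahler form $\Theta$ built from $i\partial\bar\partial\log|g|^2$, the classical Skoda pointwise inequality $|v|^2_\Theta\le C_q|f|^2/|g|^2$ gives
$$\int_\Omega|v|^2_\Theta e^{-\psi}\le C_q\int_\Omega|f|^2|g|^{-2(\alpha q+1)}e^{-\varphi},$$
which is precisely the integrand of the hypothesis, while a Bochner--Kodaira--Skoda computation provides the curvature lower bound $i\partial\bar\partial\psi\ge(1-\tfrac1\alpha)\Theta$ on $\{g\neq 0\}$; the positivity here is exactly what the hypothesis $\alpha>1$ supplies.

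\emph{Applying Theorem 1.1 --- the singular-weight trick.} Since $\psi=-\infty$ and $\Theta$ degenerates on $\{g=0\}$, Theorem 1.1 cannot be applied directly. My plan is to regularize by replacing $|g|^2$ with $|g|^2+\epsilon^2$ throughout both $\psi$ and $\Theta$, to add a small strictly psh term $\delta|z|^2$ to secure positivity of the K\"ahler form, and to smooth $\varphi$ by a decreasing sequence $\varphi_\nu$. For each triple $(\epsilon,\delta,\nu)$ Theorem 1.1 then produces $u_{\epsilon,\delta,\nu}$ solving the regularized $\bar\partial$-equation with a uniform bound
$$\int_\Omega|u_{\epsilon,\delta,\nu}|^2 e^{-\psi_{\epsilon,\delta,\nu}}\le \mathrm{const}_\alpha\int_\Omega|f|^2|g|^{-2(\alpha q+1)}e^{-\varphi}.$$
A weak-$L^2$ limit yields $u$ with $\bar\partial u=v$, and $h:=h^{(0)}-u$ is the sought solution; holomorphicity follows from $\bar\partial h=0$ distributionally combined with $h\in L^2_{\mathrm{loc}}$, and $g\cdot h=f$ is preserved under the limit by construction.

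\emph{Main obstacle.} The crux, and the trick promised in the abstract, is to perform this regularization entirely on the weight side: on a general complete K\"ahler domain one cannot exhaust by strictly pseudoconvex Stein subdomains in the customary way. The challenge is to keep the $L^2$ bounds uniform across the three-parameter approximation while confirming that the complete K\"ahler structure of $\Omega$ --- required at each stage for Theorem 1.1 --- is compatible with the regularized singular weights, and finally to transfer the uniform bound to the limit $h$ without degradation of the Skoda constant.
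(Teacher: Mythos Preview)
There is a genuine gap in your plan: invoking Theorem~1.1 (even componentwise on $\mathbb{C}^m$-valued forms) will produce a solution $u$ of $\bar\partial u=v$ with the right $L^2$ bound, but nothing in that theorem forces $g\cdot u=0$. Without this constraint, $h:=f\bar g/|g|^2-u$ satisfies $\bar\partial h=0$ but $g\cdot h=f-g\cdot u$, and $g\cdot u$ is a holomorphic function with no reason to vanish. Your sentence ``$g\cdot h=f$ is preserved under the limit by construction'' conceals precisely this issue. Even classically, Skoda's theorem is not a corollary of H\"ormander's estimate; one must work on the subbundle $\ker g\subset \Omega\times\mathbb{C}^m$ and use the modified adjoint $\bar\partial^*_S$, whose difference from the ordinary $\bar\partial^*_\varphi$ is the zero-order term $\Phi_g(u)=\bar g\cdot\sum_k\bar\partial(\bar g_k/|g|^2)\lrcorner u_k$ (Ohsawa's Lemma~4.1). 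Skoda's pointwise inequality is used twice: once to bound $([\Theta,\Lambda]^{-1}v,v)$ by $|f|^2/|g|^2$, and once to absorb $\gamma\|\Phi_g(u)\|^2$ into $([i\partial\bar\partial\psi,\Lambda]u,u)$. The second use is what your ``Bochner--Kodaira--Skoda computation'' actually requires, and it only makes sense after passing to the constrained adjoint.

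Accordingly the paper does \emph{not} reduce Theorem~1.2 to Theorem~1.1. It repeats, on the subbundle $\ker g$, the same Dirichlet-Laplacian device already used for Theorem~1.1: on each smooth bounded $\Omega_j\Subset\Omega$ (not assumed pseudoconvex) one solves $\Box_S w_j=v$ weakly with $w_j$ in the $W^{1,2}_0$-closure of compactly supported sections of $\ker g$, sets $u_j=\bar\partial^*_S w_j$ (which then automatically lies in $\ker g$), and notes that $\bar\partial u_j=v-\bar\partial^*_S\bar\partial w_j$. The error $\bar\partial^*_S\bar\partial w_j$ is shown to tend to $0$ in distributions by pairing $\bar\partial\bar\partial^*_S\bar\partial w_j=0$ against $\kappa_j^2\bar\partial w_j$ and using $|d\kappa_j|_\omega\to 0$; this is exactly where completeness of the K\"ahler metric enters. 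Your proposed three-parameter regularization of the weight is not the mechanism in play; in particular the paper never replaces $|g|^2$ by $|g|^2+\epsilon^2$, but first treats the case $|g|>0$ and then disposes of the zero set by passing to the complete K\"ahler domain $\Omega\setminus g_1^{-1}(0)$ and invoking Riemann's removable-singularities theorem.
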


The basic difference between a pseudoconvex domain and a complete K\"ahler domain is that only the former can be exhausted by subdomains of same type.  The purpose of this note is to introduce a general trick of dealing with $L^2$ estimates of $\bar{\partial}$ with\/ {\it singular}\/ weights on complete K\"ahler domains,  through giving alternative approaches of the above theorems. The underlying idea goes back to Folland-Kohn \cite{FollandKohn}, Bando \cite{Bando} and Chen-Wu-Wang \cite{ChenWuWang}.

Unfortunately, we could not prove via the same trick the Ohsawa-Takegoshi $L^2$ extension theorem on complete K\"ahler domains, except the special case of $L^2$ extension from a single point, which still has a few amusing applications  (see \cite{ChenWuWang}).

It was pointed out in \cite{XuWang} that this trick actually works for more general situations of holomorphic line bundles with singular Hermitian metrics on complete K\"ahler manifolds (even complete K\"ahler is not necessary), thanks to Demailly's theory of regularization of quasi-psh functions (see e.g. \cite{DemaillyBook10}). Here we stick to the simplest case in order to make the arguments as transparent as possible.

\section{Laplace-Beltrami equation with Dirichlet boundary condition}

Let $\Omega\subset {\mathbb C}^n$ be a bounded domain with smooth boundary and $\omega$ a K\"ahler metric on $\overline{\Omega}$. Let $\varphi$ be a smooth\/ {\it strictly psh}\/ function on $\overline{\Omega}$. Let $D_{(n,k)}(\Omega)$ be the set of smooth $(n,k)-$forms with compact support in $\Omega$, and $L^2_{(n,k)}(\Omega,\varphi)$ the completion of $D_{(n,k)}(\Omega)$ w.r.t. the inner product induced by $\omega$ and $\varphi$. Let $\bar{\partial}_\varphi^\ast$ denote the corresponding formal adjoint of $\bar{\partial}$. Then we have the famous Bochner-Kodaira-Nakano inequality:
\begin{equation}
\|\bar{\partial} u\|^2_\varphi+\|\bar{\partial}^\ast_\varphi u\|_\varphi^2\ge ([i\partial\bar{\partial}\varphi,\Lambda]u,u)_\varphi\ \ \ {\rm for\ all\ }u\in D_{(n,1)}(\Omega).
\end{equation}
Now define an Hermitian form as follows
$$
L(u,v)=(\bar{\partial}u,\bar{\partial}v)_\varphi+(\bar{\partial}^\ast_\varphi u,\bar{\partial}^\ast_\varphi v)_\varphi,\ \ \ u,v\in D_{(n,1)}(\Omega).
$$
 Put $\|u\|_L=\sqrt{L(u,u)}$. By (2.1), we have $\|u\|_L\ge {\rm const.}\|u\|_\varphi$ for all $u\in D_{(n,1)}(\Omega)$, so that $\|\cdot\|_L$ becomes a norm.
 Since $\varphi$ and $\omega$ are smooth on $\overline{\Omega}$, we conclude that $\|\cdot\|_L$ is equivalent to $\|\cdot\|_{W^{1,2}}$ on $D_{(n,1)}(\Omega)$, in view of Garding's inequality (see \cite{FollandKohn}, p. 24).

 Let $H$ denote the Hilbert space of $(n,1)$ forms with coefficients lying in the classical Sobolev space $W^{1,2}_0(\Omega)$ (with respect to the Euclidean metric). Then $(H,\|\cdot\|_L)$ is still a Hilbert space.

 \begin{proposition}
 For any $v\in L^2_{(n,1)}(\Omega,\varphi)$, there exists a unique $w\in H$ such that
 \begin{equation}
 L(u,w)=(u,v)_\varphi\ \ \ {for\ all\ } u\in H
 \end{equation}
 and
 \begin{equation}
  \max\{([i\partial\bar{\partial}\varphi,\Lambda]w,w)_\varphi,\,\|\bar{\partial} w\|^2_\varphi,\,\|\bar{\partial}^\ast_\varphi w\|_\varphi^2\}\le ([i\partial\bar{\partial}\varphi,\Lambda]^{-1} v,v)_\varphi.
 \end{equation}
 \end{proposition}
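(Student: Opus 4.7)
The plan is to produce $w$ as the Riesz representative of the functional $u\mapsto (u,v)_\varphi$ on the Hilbert space $(H,\|\cdot\|_L)$; the substance of the proof lies in upgrading the Bochner-Kodaira-Nakano inequality (2.1) into a sharp norm bound for this functional.

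First I would combine the pointwise Cauchy-Schwarz inequality against the positive Hermitian form $[i\partial\bar{\partial}\varphi,\Lambda]$---which is pointwise invertible thanks to strict plurisubharmonicity of $\varphi$ together with smoothness of $\omega$ on $\overline{\Omega}$---with (2.1), to obtain
\begin{equation*}
|(u,v)_\varphi|^2 \le ([i\partial\bar{\partial}\varphi,\Lambda]u,u)_\varphi \cdot ([i\partial\bar{\partial}\varphi,\Lambda]^{-1}v,v)_\varphi \le \|u\|_L^2 \cdot ([i\partial\bar{\partial}\varphi,\Lambda]^{-1}v,v)_\varphi
\end{equation*}
for every $u\in D_{(n,1)}(\Omega)$, hence, by the density of $D_{(n,1)}(\Omega)$ in $(H,\|\cdot\|_L)$ guaranteed by the Garding equivalence of $\|\cdot\|_L$ with $\|\cdot\|_{W^{1,2}}$, for every $u\in H$. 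The Riesz representation theorem now gives a unique $w\in H$ satisfying (2.2).

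Setting $u=w$ in (2.2) yields $\|w\|_L^2 = (w,v)_\varphi$, and combined with the displayed inequality above this collapses to
\begin{equation*}
\|\bar{\partial} w\|^2_\varphi+\|\bar{\partial}^\ast_\varphi w\|^2_\varphi = \|w\|_L^2 \le ([i\partial\bar{\partial}\varphi,\Lambda]^{-1}v,v)_\varphi,
\end{equation*}
disposing of the second and third terms of (2.3). For the first term I would approximate $w$ by a sequence $w_k\in D_{(n,1)}(\Omega)$ converging to $w$ in the $\|\cdot\|_L$ (equivalently $W^{1,2}$) topology, apply (2.1) to each $w_k$, and pass to the limit: the right-hand side converges by the bound just obtained, while $([i\partial\bar{\partial}\varphi,\Lambda]w_k,w_k)_\varphi\to ([i\partial\bar{\partial}\varphi,\Lambda]w,w)_\varphi$ because $[i\partial\bar{\partial}\varphi,\Lambda]$ is a pointwise bounded operator on $\overline{\Omega}$ by smoothness of $\varphi$ and $\omega$.

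The only genuinely delicate point, rather than a single obstacle, is the density of $D_{(n,1)}(\Omega)$ in $H$ in the $L$-norm---but this is already granted by the preceding discussion in the paper. Beyond that the proposition reduces to a one-line Riesz argument followed by the substitution $u=w$.
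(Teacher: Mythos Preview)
Your argument is correct and follows essentially the same route as the paper: bound the functional $u\mapsto (u,v)_\varphi$ via pointwise Cauchy--Schwarz against $[i\partial\bar\partial\varphi,\Lambda]$ combined with (2.1), invoke Riesz, and then set $u=w$. You are simply more explicit than the paper about the density step needed to pass (2.1) from $D_{(n,1)}(\Omega)$ to all of $H$, which the paper applies without comment.
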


 \begin{proof}
 Consider the linear functional
 $$
 F(u)=(u,v)_\varphi,\ \ \ u\in H.
 $$
  By the Cauchy-Schwarz inequality, we have
 \begin{eqnarray*}
 |F(u)|^2 & \le & ([i\partial\bar{\partial}\varphi,\Lambda]u,u)_\varphi\, ([i\partial\bar{\partial}\varphi,\Lambda]^{-1} v,v)_\varphi\\
   & \le & ([i\partial\bar{\partial}\varphi,\Lambda]^{-1} v,v)_\varphi\,\|u\|_L^2
 \end{eqnarray*}
    by (2.1), so that there exists a unique $w\in H$ satisfying (2.2), in view of the Riesz representation theorem. Furthermore, we have $\|w\|_L\le 1$ if we assume $([i\partial\bar{\partial}\varphi,\Lambda]^{-1} v,v)_\varphi=1$ for the sake of simplicity. By (2.1), we also have $([i\partial\bar{\partial}\varphi,\Lambda]w,w)_\varphi\le 1$.
 \end{proof}

Put $\Box_\varphi=\bar{\partial}\bar{\partial}^\ast_\varphi+\bar{\partial}^\ast_\varphi\bar{\partial}$. Clearly, we have
$$
L(u,v)=(\Box_\varphi u,v)_\varphi\ \ \ {\rm for\ all\ } u,v\in D_{(n,1)}(\Omega).
$$
Thus the previous proposition essentially gives a (unique) weak solution of the Laplace-Beltrami equation $\Box_\varphi w=v$. Since $\Box_\varphi$ is strongly elliptic, we conclude that $w$ is smooth whenever $v$ is.

\section{Proof of Theorem 1.1}

Let $\tilde{\omega}$ be a complete K\"ahler metric on $\Omega$.
Put
$
\omega=\tilde{\omega}+\Theta.
$
Choose a smooth exhaustion function $\rho$ on $\Omega$ satisfying $|d\rho|_{\tilde{\omega}}\le 1$.
Put $\Omega_j=\{z\in \Omega:\rho(z)<j\}$. For each $j$, we may choose a smooth strictly psh function $\varphi_j$ on $\Omega_{j+1}$ such that $\varphi_j\downarrow \varphi$ as $j\rightarrow \infty$ and $i\partial\bar{\partial}\varphi_j\ge \Theta$ on $\overline{\Omega}_j$.

Let $\chi:{\mathbb R}\rightarrow [0,1]$ be a smooth cut-off function satisfying $\chi|_{(-\infty,1/2)}=1$ and $\chi|_{(1,\infty)}=0$. Assume first that $|\varphi|$ is\/ {\it locally bounded}\/ on $\Omega$. Put $v_j=(\chi(\rho/j)v)\ast \lambda_{\varepsilon_j}$ where $\lambda$ is a standard Friedrichs mollifier and $\varepsilon_j\rightarrow 0$ as $j\rightarrow \infty$. Since $\int_\Omega |v|^2_\Theta e^{-\varphi}<\infty$, so we may choose $\varepsilon_j$ sufficiently small such that
\begin{equation}\label{eq:1}
\int_\Omega |v_j-v|^2_\Theta e^{-\varphi} \rightarrow 0\ \ \ (j\rightarrow \infty).
\end{equation}
Since
$$
\bar{\partial}v_j=(\bar{\partial}(\chi(\rho/j)v))\ast \lambda_{\varepsilon_j}=(\bar{\partial}\chi(\rho/j)\wedge v)\ast \lambda_{\varepsilon_j},
$$
so we get
\begin{equation}\label{eq:2}
\int_{\Omega} |\bar{\partial} v_j|^2_{\omega} e^{-\varphi} \rightarrow 0.
\end{equation}

 For each $(0,k)$ form $u$, we always write $\tilde{u}=dz_1\wedge\cdots\wedge dz_n\wedge u$. Applying Proposition 2.1 to $(\Omega_j,\omega,\varphi_j)$, we get a smooth solution $w_{j}$ of $\Box_{\varphi_j} w=\tilde{v}_j$ together with the following estimate
\begin{equation}
  \max\{\|\bar{\partial} w_{j}\|^2_{\varphi_j},\,\|\bar{\partial}^\ast_{\varphi_j} w_{j}\|_{\varphi_j}^2\}\le ([i\partial\bar{\partial}\varphi_j,\Lambda]^{-1} \tilde{v}_j,\tilde{v}_j)_{\varphi_j}\le \int_\Omega |v_j|^2_\Theta e^{-\varphi}.
 \end{equation}\label{eq:Hormander3}
 Put $\tilde{u}_{j}=\bar{\partial}^\ast_{\varphi_j} w_{j}$ on $\Omega_j$. We may choose a weakly convergent subsequence $\{\tilde{u}_{j_k}\}$ in $L^2_{(n,0)}(\Omega,{\rm loc})$ such that the weak limit $\tilde{u}=u dz_1\wedge\cdots\wedge dz_n$ satisfies
 $$
 \int_\Omega |u|^2 e^{-\varphi} \le  {\lim\inf}_{j\rightarrow \infty}\int_\Omega |v_j|^2_\Theta e^{-\varphi}= \int_\Omega |v|^2_\Theta e^{-\varphi}.
$$
 Since $\tilde{v}_j=\bar{\partial} \tilde{u}_j+\bar{\partial}^{\ast}_{\varphi_j}\bar{\partial} w_j$, and $\tilde{v}_j\rightarrow \tilde{v}$ in the sense of distributions, in view of (\ref{eq:1}) (note that $|\varphi|$ is locally bounded), so $\bar{\partial} \tilde{u}=\tilde{v}$ (i.e., $\bar{\partial}u=v$) if and only if
 \begin{equation}\label{eq:4}
 \bar{\partial}^\ast_{\varphi_j}\bar{\partial} w_j \rightarrow 0
 \end{equation}
 in the sense of distributions. For any $j$, we put $\kappa_j=\chi(\varepsilon/(j-1))$. Clearly,  ${\rm supp\,} \kappa_j\subset \Omega_j$. Since
 $\bar{\partial} \tilde{v}_j=\bar{\partial}\bar{\partial}^\ast_{\varphi_j}\bar{\partial}w_j$ on $\Omega_j$, so we have
 \begin{eqnarray*}
 (\bar{\partial}\tilde{v}_j,\kappa_j^2 \bar{\partial} w_j)_{\varphi_j} = (\bar{\partial}(\kappa_j^2 \bar{\partial}^\ast_{\varphi_j}\bar{\partial}w_j),\bar{\partial} w_j)_{\varphi_j}
 -2 (\kappa_j\bar{\partial} \kappa_j\wedge \bar{\partial}^\ast_{\varphi_j}\bar{\partial}w_j,\bar{\partial}w_j)_{\varphi_j},
 \end{eqnarray*}
 so that
 \begin{eqnarray}\label{eq:5}
 \|\kappa_j \bar{\partial}^\ast_{\varphi_j}\bar{\partial}w_j\|_{\varphi_j}^2 & \le &  \|\bar{\partial}w_j\|_{\varphi_j}(\|\bar{\partial} \tilde{v}_j\|_{\varphi_j}+2\varepsilon_j \sup|\chi'|\|\kappa_j \bar{\partial}^\ast_{\varphi_j}\bar{\partial}w_j\|_{\varphi_j})\nonumber\\
 & \le &  \left(\int_\Omega |v_j|^2_\Theta e^{-\varphi}\right)^{1/2}(\|\bar{\partial} \tilde{v}_j\|_{\varphi_j}+2\varepsilon_j \sup|\chi'|\|\kappa_j \bar{\partial}^\ast_{\varphi_j}\bar{\partial}w_j\|_{\varphi_j}).
 \end{eqnarray}
It follows from $(\ref{eq:1})$ $\sim$ (3.3) that $\|\kappa_j \bar{\partial}^\ast_{\varphi_j}\bar{\partial}w_j\|_{\varphi_j}\rightarrow 0$ as $j\rightarrow \infty$.

 Now let $\Omega'$ be any given relatively compact open subset in $\Omega$. We may choose $j_0$ sufficiently large such that $\Omega'\subset \Omega_j$ and $\kappa_j=1$ on $\Omega'$ for all $j\ge j_0$. Thus for any $f\in D_{(n,1)}(\Omega')$ we have
 \begin{eqnarray*}
 \left| (\bar{\partial}^\ast_{\varphi_j}\bar{\partial}w_j,f)_{\varphi_{j_0}} \right| & = &  \left| (\kappa_j\bar{\partial}^\ast_{\varphi_j}\bar{\partial}w_j,f)_{\varphi_{j_0}} \right|\\
 & \le & \|\kappa_j\bar{\partial}^\ast_{\varphi_j}\bar{\partial}w_j\|_{\varphi_j}\|f\|_{\varphi_{j_0}}\\
 & \rightarrow & 0
 \end{eqnarray*}
 as $j\rightarrow \infty$, so that (\ref{eq:4}) holds.

 For general $\varphi$, we put $\varphi_{m,\varepsilon}=\max\{\varphi,-m\}+\varepsilon |z|^2$, $m=1,2,\cdots$, $\varepsilon>0$. Let
 $$
 \Theta_{m,\varepsilon}:=\chi_{\{\varphi>-m\}}\Theta+\varepsilon i\partial\bar{\partial}|z|^2
 $$
 where $\chi_{\{\varphi>-m\}}$ is the characteristic function on $\{\varphi>-m\}$. Clearly, we have $i\partial\bar{\partial}\varphi_{m,\varepsilon}\ge \Theta_{m,\varepsilon}$.
  For each $m$ and $\varepsilon$, we have a solution $u_{m,\varepsilon}$ of the equation $\bar{\partial}u=v$ satisfying
 $$
 \int_\Omega |u_{m,\varepsilon}|^2 e^{-\varphi_{m,\varepsilon}}\le \int_\Omega |v|_{\Theta_{m,\varepsilon}}^2 e^{-\varphi_{m,\varepsilon}}\le \varepsilon^{-1}\int_{\varphi\le -m}|v|^2 e^{-\varphi} +\int_\Omega |v|_\Theta^2 e^{-\varphi}
 $$
 (here $\Theta_{m,\varepsilon}$ is not continuous, however since $|dz|^2_{\Theta_{m,\varepsilon}}\le 1/\varepsilon$, the argument above still works).
  Since $\int_{\varphi>-m} |v|^2 e^{-\varphi}\rightarrow \int_\Omega |v|^2e^{-\varphi}$ in view of Levi's theorem (notice that $|\varphi^{-1}(-\infty)|=0$), it follows that $\int_{\varphi\le -m}|v|^2 e^{-\varphi}\rightarrow 0$. Let $u_\varepsilon$ be a weak limit of $\{u_{m,\varepsilon}\}_m$ in $L^2(\Omega,{\rm loc})$. Then we have
  $\bar{\partial}u_\varepsilon=v$ and
  $$
  \int_\Omega |u_\varepsilon|^2 e^{-\varphi-\varepsilon |z|^2}\le \int_\Omega |v|_\Theta^2 e^{-\varphi}.
  $$
    It suffices to take a weak limit of $\{u_\varepsilon\}$ in $L^2(\Omega,{\rm loc})$.

\section{Modified Laplace-Beltrami equation with Dirichlet boundary condition}

Let $\Omega$ be a bounded domain with smooth boundary in ${\mathbb C}^n$ and $\omega$ a K\"ahler metric on $\overline{\Omega}$. Let $g\in {\mathcal O}(\overline{\Omega})^{\oplus m}$ with $|g|>0$, and $\varphi\in C^\infty(\overline{\Omega},{\mathbb R})$. Let $D_{(n,k)}(\Omega)$ be the set of smooth $(n,k)-$forms with compact support in $\Omega$, and $L^2_{(n,k)}(\Omega,\varphi)$ the completion of $D_{(n,k)}(\Omega)$ w.r.t. the inner product induced by $\omega$ and $\varphi$.
To solve the division problem it suffices to solve the following vector-valued $\bar{\partial}-$equation
$$
\bar{\partial}u=v:=f dz_1\wedge\cdots\wedge dz_n \wedge \bar{\partial}(\bar{g}/|g|^2)
$$
which satisfies $g\cdot u=0$. Thus it is natural to introduce the following
$$
D_k(\Omega)=\left\{u\in D_{(n,k)}(\Omega)^{\oplus m}: g\cdot u=0 \right\}
$$
and
$$
S_k(\Omega,\varphi)=\left\{u\in L^2_{(n,k)}(\Omega,\varphi)^{\oplus m}: g\cdot u=0 \right\}.
$$
For each $u=(u_1,\cdots,u_m) \in L^2_{(n,k)}(\Omega,\varphi)^{\oplus m}$, we define
$$
\|u\|_\varphi=\sqrt{\sum_k \|u_k\|^2_\varphi}.
$$
Clearly, the completion of  $D_k(\Omega)$ w.r.t. the norm $\|\cdot\|_\varphi$ is contained in $S_k(\Omega,\varphi)$. Yet it is not clear whether they actually\/ {\it coincide}. As $g$ is holomorphic, the $\bar{\partial}$ operator from $D_{(n,0)}(\Omega)^{\oplus m}$ to $D_{(n,1)}(\Omega)^{\oplus m}$ induces a new operator from $D_0(\Omega)$ to $D_1(\Omega)$, which is still denoted by the same symbol for the sake of simplicity. Let $\bar{\partial}^\ast_\varphi$ (resp. $\bar{\partial}^\ast_S$) denote the formal adjoint of $\bar{\partial}:D_{(n,0)}(\Omega)^{\oplus m}\rightarrow D_{(n,1)}(\Omega)^{\oplus m}$ (resp. $\bar{\partial}:D_0(\Omega)\rightarrow D_1(\Omega)$), w.r.t. the inner product $(\cdot,\cdot)_\varphi$. The following crucial observation is essentially due to Ohsawa:

\medskip

\begin{lemma}[cf. \cite{Ohsawa}]
For any $u\in D_1(\Omega)$, we have
$$
\bar{\partial}^\ast_S u=\bar{\partial}^\ast_\varphi u-\bar{g}\cdot \sum_{k=1}^m \bar{\partial}(\bar{g}_k/|g|^2)\lrcorner u_k\footnote{The original formulation in \cite{Ohsawa} is slightly different.}
$$
where $"\lrcorner"$ is the contraction operator.
\end{lemma}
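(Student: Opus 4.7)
The plan is to verify the formula directly against the defining property of the formal adjoint $\bar{\partial}^\ast_S$. Write $Y$ for the right-hand side of the asserted identity; the task is to show $(\bar{\partial}\phi, u)_\varphi = (\phi, Y)_\varphi$ for every $\phi \in D_0(\Omega)$. Since the constraint $g\cdot\phi = 0$ plays no role in componentwise integration by parts, the unconstrained adjoint identity gives
\[
(\bar{\partial}\phi, u)_\varphi = \sum_{k=1}^m (\bar{\partial}\phi_k, u_k)_\varphi = \sum_{k=1}^m (\phi_k, \bar{\partial}^\ast_\varphi u_k)_\varphi = (\phi, \bar{\partial}^\ast_\varphi u)_\varphi,
\]
which accounts for the $\bar{\partial}^\ast_\varphi u$ summand of $Y$.

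It then suffices to show that for any scalar $(n,0)$-form $\mu$, the vector $\bar{g}\cdot\mu := (\bar{g}_1\mu, \ldots, \bar{g}_m\mu)$ is $(\cdot,\cdot)_\varphi$-orthogonal to every $\phi \in D_0(\Omega)$. This is fiberwise: writing $\phi_k = f_k\,dz_1\wedge\cdots\wedge dz_n$ and $\mu = h\,dz_1\wedge\cdots\wedge dz_n$, the constraint reads $\sum_k g_k f_k \equiv 0$, whence
\[
(\phi, \bar{g}\cdot\mu)_\varphi = \int_\Omega \bar h\,|dz_1\wedge\cdots\wedge dz_n|^2 \Big(\sum_k g_k f_k\Big) e^{-\varphi}\,dV = 0.
\]
Taking $\mu = \sum_{k=1}^m \bar{\partial}(\bar{g}_k/|g|^2)\lrcorner u_k$ (a smooth, compactly supported $(n,0)$-form, since $|g|$ is bounded away from zero on $\overline{\Omega}$) concludes the verification.

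I do not expect any real technical obstacle here: the entire proof reduces to the pointwise Hermitian orthogonality of the hyperplane $\{g\cdot\phi = 0\}$ to the $\bar{g}$-direction. The one point worth stressing is that the formal adjoint of $\bar{\partial}:D_0 \to D_1$ is determined only modulo sections proportional to $\bar{g}$, so the content of the lemma is not the existence of \emph{some} representative but the convenience of the \emph{specific} one written down: the correction term $\sum_k \bar{\partial}(\bar{g}_k/|g|^2)\lrcorner u_k$ is engineered to pair transparently against sources $v = f\,dz_1\wedge\cdots\wedge dz_n\wedge \bar{\partial}(\bar{g}/|g|^2)$ of the shape arising in the Skoda division problem, which is what gives the Ohsawa identity its analytic utility.
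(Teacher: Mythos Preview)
Your argument verifies only half of what is needed. You correctly show that the expression $Y:=\bar{\partial}^\ast_\varphi u-\bar g\cdot\mu$ (with $\mu=\sum_k\bar{\partial}(\bar g_k/|g|^2)\lrcorner u_k$) satisfies $(\bar{\partial}\phi,u)_\varphi=(\phi,Y)_\varphi$ for all $\phi\in D_0(\Omega)$, since $\bar g\cdot\mu$ is fiberwise orthogonal to $D_0(\Omega)$. But as you yourself observe, this pins down $Y$ only modulo $\bar g\cdot L^2_{(n,0)}$. The formal adjoint $\bar{\partial}^\ast_S$ is by definition a map into $D_0(\Omega)$ (equivalently, into $S_0(\Omega,\varphi)$); it is the \emph{unique} element of that subspace satisfying the adjoint relation. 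So to prove the lemma you must also check $g\cdot Y=0$, and this you never do. Your closing remark that ``the content of the lemma is not the existence of some representative but the convenience of the specific one'' is exactly backwards: the content is precisely that this specific formula lands in $D_0(\Omega)$, and that fact is what is used downstream in the Skoda argument, where one sets $u_j=\bar{\partial}^\ast_S w_j$ and needs $g\cdot u_j=0$ to obtain the division $g\cdot h=f$.

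The paper's proof handles this cleanly by working the other way around: it identifies $S_0(\Omega,\varphi)^\perp=\bar g\cdot L^2_{(n,0)}(\Omega,\varphi)$, takes an orthonormal basis $e_j=\chi_j\bar g/|g|$ of the complement, and computes the orthogonal projection $P(u)$ of $\bar{\partial}^\ast_\varphi u$ onto $S_0$. The coefficients $(\bar{\partial}^\ast_\varphi u,e_j)_\varphi=(u,\bar{\partial}e_j)_\varphi$ are evaluated using $g\cdot u=0$ (this kills the $\bar{\partial}(|g|\chi_j)\cdot\bar g/|g|^2$ piece of $\bar{\partial}e_j$), and summing the basis expansion yields exactly $\bar g\cdot\mu$, so $P(u)=Y$. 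Membership $Y\in S_0$ is then automatic, and the adjoint relation follows since $P(u)$ differs from $\bar{\partial}^\ast_\varphi u$ by an element of $S_0^\perp$. Your approach can be salvaged by directly verifying $g\cdot Y=0$ (using, e.g., the commutation $\bar{\partial}^\ast_\varphi(g_k u_k)=g_k\bar{\partial}^\ast_\varphi u_k-(\bar{\partial}\bar g_k)\lrcorner u_k$ together with $\sum_k g_k u_k=0$), but that computation is the substance of the lemma and cannot be omitted.
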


\medskip

\begin{proof}
It is easy to show that the orthogonal complement of $S_0(\Omega,\varphi)$ in $L^2_{(n,0)}(\Omega,\varphi)^{\oplus m}$ is
$$
S_0(\Omega,\varphi)^\bot=\bar{g}\cdot L^2_{(n,0)}(\Omega,\varphi).
$$
Since $L^2_{(n,0)}(\Omega,\varphi)$ is a separable Hilbert space and $D_{(n,0)}(\Omega)$ is dense in $L^2_{(n,0)}(\Omega,\varphi)$, we may choose by the Gram-Schmidt method a complete orthonormal basis $\{e_j\}\subset \bar{g}\cdot D_{(n,0)}(\Omega)$ of $S_0(\Omega,\varphi)^\bot$. Let $P(u)$ be the projection of $\bar{\partial}^\ast_\varphi u$ to $S_0(\Omega,\varphi)$, i.e.,
$$
P(u)=\bar{\partial}^\ast_\varphi u-\sum_j (\bar{\partial}^\ast_\varphi u,e_j)_\varphi e_j.
$$
Put $e_j=\chi_j \bar{g}/|g|$. Clearly $\chi_j\in D_{(n,0)}(\Omega)$ and $\{\chi_j\}$ forms a complete orthonormal basis of $L^2_{(n,0)}(\Omega,\varphi)$. Since $u\in D_1(\Omega)$ and
$$
\bar{\partial} e_j=\bar{\partial}(|g|\chi_j)\cdot \bar{g}/|g|^2+|g|\chi_j\cdot \bar{\partial}(\bar{g}/|g|^2),
$$
it follows that
\begin{eqnarray*}
(\bar{\partial}^\ast_\varphi u,e_j)_\varphi & = & (u,\bar{\partial}e_j)_\varphi=(u,|g|\chi_j\cdot \bar{\partial}(\bar{g}/|g|^2))_\varphi\\
& = & \sum_k (|g|\bar{\partial}(\bar{g}_k/|g|^2)\lrcorner u_k,\chi_j)_\varphi.
\end{eqnarray*}
Thus
\begin{eqnarray*}
P(u) & = & \bar{\partial}^\ast_\varphi u- \sum_j \sum_k (|g|\bar{\partial}(\bar{g}_k/|g|^2)\lrcorner u_k,\chi_j)_\varphi e_j\\
& = & \bar{\partial}^\ast_\varphi u-\bar{g}\cdot \sum_{k=1}^m \bar{\partial}(g_k/|g|^2)\lrcorner u_k,
\end{eqnarray*}
which clearly lies in $D_0(\Omega)$. For any $w\in D_0(\Omega)$, we have
$$
(\bar{\partial}^\ast_S u,w)_\varphi=(u,\bar{\partial} w)_\varphi=(\bar{\partial}^\ast_\varphi u,w)_\varphi=(P(u),w)_\varphi.
$$
Choosing $w=\bar{\partial}^\ast_S u-P(u)$, we immediately get $\bar{\partial}^\ast_S u=P(u)$.
\end{proof}
Put
$$
\Phi_g(u)=\bar{g}\cdot \sum_{k=1}^m \bar{\partial}(\bar{g}_k/|g|^2)\lrcorner u_k.
$$
By Cauchy-Schwarz's inequality, we have
$$
\|\bar{\partial}^\ast_\varphi u\|^2_\varphi \le \frac{\gamma}{\gamma-1}\|\bar{\partial}^\ast_S u\|^2_\varphi+\gamma \|\Phi_g(u)\|^2_\varphi
$$
for all $u\in D_1(\Omega)$ and $\gamma>1$. Combining with the Bochner-Kodaira-Nakano inequality, we obtain
$$
\frac{\gamma}{\gamma-1}\|\bar{\partial}^\ast_S u\|^2_\varphi+\|\bar{\partial}u \|^2_\varphi\ge ([i\partial\bar{\partial}\varphi,\Lambda]u,u)_\varphi-\gamma \|\Phi_g(u)\|^2_\varphi.
$$

Now suppose there is a number $\gamma>1$ such that the RHS of the previous inequality is no less than $\frac{\gamma}{\gamma-1}([\Theta,\Lambda]u,u)_\varphi$ where $\Theta$ is a continuous positive $(1,1)-$form on $\overline{\Omega}$. It follows that
\begin{equation}
\|\bar{\partial}^\ast_S u\|^2_\varphi+\|\bar{\partial} u\|^2_\varphi \ge ([\Theta,\Lambda]u,u)_\varphi\ \ \ {\rm for\ all\ } u\in D_1(\Omega).
\end{equation}
Put $\Box_S=\bar{\partial}\bar{\partial}^\ast_S+\bar{\partial}^\ast_S\bar{\partial}$. Clearly, we have
$$
\|\bar{\partial}^\ast_S u\|^2_\varphi+\|\bar{\partial} u\|^2_\varphi=(\Box_S u,u)_\varphi\ \ \ {\rm for\ all\ } u\in D_1(\Omega).
$$
 Let ${\mathcal H}={\mathcal H}(\Omega,\varphi)$ (resp. ${\mathcal H}_S={\mathcal H}_S(\Omega,\varphi)$) be the completion of $D_1(\Omega)$ in $S_1(\Omega,\varphi)$ w.r.t. the norm $\|\cdot\|_\varphi$ (resp. $\|\bar{\partial}^\ast_S \cdot\|_\varphi+\|\bar{\partial} \cdot\|_\varphi$). Clearly, ${\mathcal H}_S\subset {\mathcal H}$. Similar as \S 2, we may prove the following

\begin{proposition}
For any $v\in {\mathcal H}$, there is a unique weak solution $w\in {\mathcal H}_S$ of the equation $\Box_S w=v$ such that
$$
\max\{([\Theta,\Lambda]w,w)_\varphi,\|\bar{\partial}^\ast_S w\|^2_\varphi,\|\bar{\partial} w\|^2_\varphi\} \le ([\Theta,\Lambda]^{-1}v,v)_\varphi.
$$
\end{proposition}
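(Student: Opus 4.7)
The plan is to mirror the Riesz-representation argument of Proposition 2.1, with the modified Bochner--Kodaira--Nakano inequality (4.1) replacing (2.1) and $({\mathcal H}_S,L_S)$ playing the role of $(H,L)$. First I would introduce the sesquilinear form
$$
L_S(u,v) = (\bar{\partial} u,\bar{\partial} v)_\varphi + (\bar{\partial}^\ast_S u,\bar{\partial}^\ast_S v)_\varphi, \qquad u,v\in D_1(\Omega),
$$
and observe that since $\Theta$ is continuous and positive on $\overline{\Omega}$, the endomorphism $[\Theta,\Lambda]$ is uniformly bounded below on $(n,1)$-forms. Combined with (4.1), this shows $\|u\|_{L_S}:=\sqrt{L_S(u,u)}$ dominates $\|u\|_\varphi$ on $D_1(\Omega)$, so $\|\cdot\|_{L_S}$ is a genuine Hilbertian norm that is equivalent to the graph norm $\|\bar{\partial}^\ast_S\cdot\|_\varphi+\|\bar{\partial}\cdot\|_\varphi$ used in the definition of ${\mathcal H}_S$. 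Consequently $({\mathcal H}_S,L_S)$ is a Hilbert space continuously embedded into ${\mathcal H}$.

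Given $v\in{\mathcal H}$, I would then consider the linear functional $F(u)=(u,v)_\varphi$ on ${\mathcal H}_S$. Applying Cauchy--Schwarz to the pairing induced by $[\Theta,\Lambda]^{1/2}$ and $[\Theta,\Lambda]^{-1/2}$ and then invoking (4.1) yields
$$
|F(u)|^2 \le ([\Theta,\Lambda]u,u)_\varphi\cdot([\Theta,\Lambda]^{-1}v,v)_\varphi \le ([\Theta,\Lambda]^{-1}v,v)_\varphi\cdot\|u\|_{L_S}^2,
$$
so $F$ is bounded on ${\mathcal H}_S$ with norm at most $\sqrt{([\Theta,\Lambda]^{-1}v,v)_\varphi}$; note also that $([\Theta,\Lambda]^{-1}v,v)_\varphi<\infty$ because $[\Theta,\Lambda]^{-1}$ is uniformly bounded on $\overline{\Omega}$. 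The Riesz representation theorem then delivers a unique $w\in{\mathcal H}_S$ with $L_S(u,w)=(u,v)_\varphi$ for every $u\in{\mathcal H}_S$, which is the weak form of $\Box_S w=v$. Substituting $u=w$ and applying Cauchy--Schwarz in $L_S$ gives $\|w\|_{L_S}^2\le([\Theta,\Lambda]^{-1}v,v)_\varphi$, which is precisely the asserted bound on $\|\bar{\partial}w\|_\varphi^2$ and $\|\bar{\partial}^\ast_S w\|_\varphi^2$. Extending (4.1) to ${\mathcal H}_S$ by density from $D_1(\Omega)$ then yields $([\Theta,\Lambda]w,w)_\varphi\le\|w\|_{L_S}^2\le([\Theta,\Lambda]^{-1}v,v)_\varphi$, completing the three-fold estimate.

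The one point where this argument diverges from Proposition 2.1 is that we cannot invoke Garding's inequality to identify ${\mathcal H}_S$ with a Sobolev space, because the side constraint $g\cdot u=0$ obstructs such an identification. Fortunately, none is needed: the abstract Riesz argument requires only that $({\mathcal H}_S,L_S)$ be a Hilbert space and that $F$ be bounded on it, both of which follow directly from (4.1) and the definition of ${\mathcal H}_S$ as a completion. The only routine verification is that $\bar{\partial}$ and $\bar{\partial}^\ast_S$, initially defined on $D_1(\Omega)$, extend by continuity to closed operators on ${\mathcal H}_S$ so that $L_S(\cdot,\cdot)$ and the resulting weak equation $L_S(u,w)=(u,v)_\varphi$ make sense throughout ${\mathcal H}_S$; this is automatic from the graph-norm construction.
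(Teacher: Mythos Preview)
Your proposal is correct and follows exactly the approach the paper intends: the paper's only proof of this proposition is the remark ``Similar as \S 2, we may prove the following,'' and you have carried out precisely that adaptation of the Riesz-representation argument from Proposition~2.1, with (4.1) in place of (2.1) and the completion $\mathcal{H}_S$ in place of the Sobolev space $H$. Your observation that Garding's inequality is unnecessary here---because $\mathcal{H}_S$ is already defined as an abstract completion rather than identified with a concrete Sobolev space---is accurate and matches the setup of \S 4.
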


\section{Proof of Theorem 1.2}

Let us first recall the following

\begin{lemma}[cf. Skoda \cite{Skoda}]
For any matrix $\zeta=(\zeta_{k\mu})_{m\times n}$, we have
$$
q \sum_k \sum_{\mu,\nu} \frac{\partial^2 \log |g|^2}{\partial z_\mu \partial \bar{z}_\nu}\zeta_{k\mu} \bar{\zeta}_{k\nu}\ge |g|^2 \left|\sum_k\sum_\mu \frac{\partial}{\partial z_\mu}(g_k/|g|^2)\zeta_{k\mu}\right|^2.
$$
\end{lemma}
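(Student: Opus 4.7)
The plan is to translate the inequality into a linear-algebraic trace--Frobenius estimate for a matrix of controlled rank, manufactured from $\zeta$ together with the $1$-jet of $g$. The crucial auxiliary object is the $m\times n$ matrix $a=(a_{k\mu})$ defined by
$$
a_{k\mu}:=|g|\cdot \frac{\partial}{\partial z_\mu}\Bigl(\frac{g_k}{|g|^2}\Bigr)=\frac{1}{|g|}\Bigl(\frac{\partial g_k}{\partial z_\mu}-g_k\,\frac{\partial \log|g|^2}{\partial z_\mu}\Bigr),
$$
which I would single out by verifying two properties via direct computation: (i) $\sum_k a_{k\mu}\bar a_{k\nu}=\partial^2\log|g|^2/(\partial z_\mu\partial\bar z_\nu)$, i.e.\ the ``extra'' rank-one piece in the Levi form cancels exactly, and (ii) $\sum_k g_k a_{k\mu}=0$ for every $\mu$, so the rows of $a$ satisfy the linear relation encoded by $g$.

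Once $a$ is in hand the rest is mechanical. Set $M:=\zeta a^{\top}$, the $m\times m$ matrix with entries $M_{kl}=\sum_\mu \zeta_{k\mu}\,a_{l\mu}$. By (i) the left-hand side of Skoda's inequality becomes $q\sum_{k,l}|M_{kl}|^2=q\,\|M\|_F^2$, while by the very definition of $a$ the right-hand side collapses to $|\sum_{k,\mu}a_{k\mu}\zeta_{k\mu}|^2=|\operatorname{tr} M|^2$. Property (ii) forces $\operatorname{rank} a\le m-1$, and trivially $\operatorname{rank} a\le n$, whence $\operatorname{rank} M\le \operatorname{rank} a\le q$.

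The proof is then finished by the universal bound $|\operatorname{tr} M|^2\le r\,\|M\|_F^2$ valid for every square matrix of rank at most $r$: with an SVD $M=U\Sigma V^*$ and $W=V^*U$ unitary, $|\operatorname{tr} M|=|\sum_i\sigma_i W_{ii}|\le\sum_i\sigma_i\le\sqrt{r}\,\|M\|_F$ by Cauchy--Schwarz on the $r$ nonzero singular values. The main conceptual obstacle is precisely the choice of normalization behind (i) and (ii): the factor $|g|$ in front of $\partial(g_k/|g|^2)/\partial z_\mu$ is what simultaneously reproduces the Levi form of $\log|g|^2$ and constrains $a$ to the hyperplane $\{g^{\top} a=0\}$, so that the sharp constant $q=\min\{n,m-1\}$ emerges as a rank count rather than being guessed. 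Everything downstream --- the explicit computation of the derivatives, the identification of the two sides with $\|M\|_F^2$ and $|\operatorname{tr} M|^2$, and the trace--Frobenius estimate --- is routine.
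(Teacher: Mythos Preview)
The paper does not supply its own proof of this lemma; it merely states it with a citation to Skoda \cite{Skoda} and then uses it as a black box in the proof of Theorem~1.2. There is therefore nothing in the paper to compare your argument against.

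Your argument itself is correct and is in fact the standard way of proving Skoda's inequality. One small slip: in property~(ii) the relation should read $\sum_k \bar g_k\, a_{k\mu}=0$, not $\sum_k g_k\, a_{k\mu}=0$. Indeed
\[
\sum_k \bar g_k\, a_{k\mu}=\frac{1}{|g|}\Bigl(\sum_k \bar g_k\,\partial_{z_\mu} g_k - |g|^2\,\partial_{z_\mu}\log|g|^2\Bigr)=0,
\]
whereas $\sum_k g_k\,\partial_{z_\mu} g_k$ has no reason to relate to $|g|^2$. This does not affect the rank count: a nontrivial $\mathbb C$-linear relation among the rows of $a$ (with coefficients $\bar g_k$, not all zero since $|g|>0$) still forces $\operatorname{rank} a\le m-1$, and the rest of your argument goes through verbatim.
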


Assume first that $|g|>0$ on $\Omega$. We fix a complete K\"ahler metric $\omega$ on $\Omega$ and take a increasing sequence of smooth subdomains $\Omega_1\subset\subset\Omega_2\subset\subset\cdots$, so that $\Omega=\cup_j \Omega_j$. Choose strictly psh functions $\varphi_j$ on $\overline{\Omega}_j$ such that $\varphi_j\downarrow \varphi$ as $j\rightarrow \infty$. Put
$$
\psi_j=\varphi_j+\alpha q \log |g|^2.
$$
In view of Skoda's lemma, we may choose $\gamma=\frac{\alpha+1}2$ in previous section such that
$$
\|\bar{\partial}^\ast_S u\|^2_{\psi_j}+\|\bar{\partial} u\|^2_{\psi_j}\ge ([\Theta_j,\Lambda]u,u)_{\psi_j}\ \ \ {\rm for\ all\ } u\in D_1(\Omega_j),
$$
 where
$$
\Theta_j=\frac{\alpha-1}{\alpha+1} i\partial\bar{\partial}\left(\varphi_j+\frac{\alpha-1}2q \log |g|^2\right)
$$
is a smooth positive $(1,1)-$form on $\overline{\Omega}_j$. Put $v=f dz_1\wedge\cdots\wedge dz_n\wedge \bar{\partial}(\bar{g}/|g|^2)$. Since
$$
g\cdot v=f dz_1\wedge\cdots\wedge dz_n\wedge \bar{\partial}(g\cdot \bar{g}/|g|^2)=f dz_1\wedge\cdots\wedge dz_n\wedge \bar{\partial} 1=0
$$
and $g\cdot (\kappa v)=0$ for any $\kappa\in C^\infty_0(\Omega_j)$, it follows that $v\in {\mathcal H}_{j}={\mathcal H}(\Omega_j,\psi_j)$. Thus in view of Proposition 4.2 there exists a unique $w_j\in {\mathcal H}_{S,j}={\mathcal H}_S(\Omega_j,\psi_j)$ such that $\Box_S w_j=v$ on $\Omega_j$ and
\begin{eqnarray*}
 && \max\{\|\bar{\partial} w_j\|^2_{\psi_j},\,\|\bar{\partial}^\ast_S w_j\|^2_{\varphi_j}\}\\
 &\le & ([\Theta_j,\Lambda]^{-1}v,v)_{\psi_j} \le  {\rm const}_\alpha \int_\Omega |f|^2 |g|^{-2(\alpha q+1)} e^{-\varphi}
\end{eqnarray*}
where the second inequality follows from Skoda's lemma.  Note also that $w_j$ is smooth on $\Omega_j$ for $\Box_S$ is an elliptic operator. Put $u_j=\bar{\partial}^\ast_S w_j$. Then there is a weakly convergent subsequence $\{u_{j_k}\}$ in $L^2_{(n,0)}(\Omega,{\rm loc})^{\oplus m}$ such that the weak limit $u$ satisfies the following estimate
$$
\int_\Omega |u|^2 |g|^{-2\alpha q} e^{-\varphi}\le {\rm const}_\alpha \int_\Omega |f|^2 |g|^{-2(\alpha q+1)} e^{-\varphi}.
$$
If $\bar{\partial}u=v$ holds in the sense of distributions, then we may conclude the proof by taking
$$
h dz_1\wedge\cdots\wedge dz_n=f \bar{g}/|g|^2 dz_1\wedge\cdots\wedge dz_n-u.
$$
  Since $v=\Box_S w_j=\bar{\partial} u_j+\bar{\partial}^\ast_S \bar{\partial} w_j$, so to verify $\bar{\partial}u=v$ it suffices to show $\bar{\partial}^\ast_S \bar{\partial} w_j\rightarrow 0$ in the sense of distributions. This can be done by a similar argument as \S 3. We include it here for the sake of completeness. Let $\rho,\chi,\kappa_j$ be given as before.  Since $\bar{\partial} \bar{\partial}^\ast_S \bar{\partial} w_j=0$ on $\Omega_j$, so we have
\begin{eqnarray*}
0 & = & (\bar{\partial} \bar{\partial}^\ast_S \bar{\partial} w_j,\kappa_j^2 \bar{\partial} w_j)_{\psi_j}
        =  \|\kappa_j \bar{\partial}^\ast_S \bar{\partial} w_j\|^2_{\psi_j}-2 ( \bar{\partial}^\ast_S  \bar{\partial} w_j,\kappa_j \bar{\partial} \kappa_j \lrcorner\,  \bar{\partial} w_j)_{\psi_j}.
\end{eqnarray*}
It follows from Cauchy-Schwarz's inequality that
$$
\|\kappa_j \bar{\partial}^\ast_S \bar{\partial} w_j\|_{\psi_j}  \le  2 \varepsilon_j\sup|\chi'| \| \bar{\partial}w_j\|_{\psi_j}\le {\rm const}_\alpha\varepsilon_j \int_\Omega |f|^2 |g|^{-2(\alpha q+1)} e^{-\varphi}.
$$
 Now let $\Omega'$ be any given relatively compact open subset in $\Omega$. We may choose $j_0$ sufficiently large such that $\Omega'\subset \Omega_j$ and $\kappa_j=1$ on $\Omega'$ for all $j\ge j_0$. Thus for any $f\in D_{(n,1)}(\Omega')^{\oplus m}$ we have
$$
|(\bar{\partial}^\ast_S  \bar{\partial} w_j,f)_{\psi_{j_0}}|=|(\kappa_j\bar{\partial}^\ast_S \bar{\partial} w_j,f)_{\psi_{j_0}}|\le \|\kappa_j \bar{\partial}^\ast_S \bar{\partial} w_j\|_{\psi_j} \|f\|_{\varphi_{j_0}}\rightarrow 0
$$
as $j\rightarrow \infty$, so that $\bar{\partial}^\ast_S \bar{\partial} w_j\rightarrow 0$ in the sense of distributions.

For general case, we may apply the previous argument to the complete K\"ahler domain $\Omega\backslash g_1^{-1}(0)$ (e.g., $\omega+\partial\bar{\partial}|g_1|^{-2}$ is a complete K\"ahler metric) and conclude the proof by  Riemann's theorem on removable singularities.

\medskip

\textbf{Acknowledgement.}
 The author would like to thank Dr. Xieping Wang for catching an inaccuracy in the previous version of the paper.

\end{document}